\documentclass[11pt]{article}
\usepackage{latexsym}
\usepackage{comment}
\usepackage{amsmath,amsthm,amsfonts,amssymb,graphicx,epsfig,latexsym,color}
\usepackage{epsf,enumerate}
\usepackage{fancybox}
\usepackage{tikz-cd}
\usetikzlibrary{arrows}

\title{Homological stability of $Aut(F_n)$ revisited}
\author{Mladen Bestvina\thanks{The
    author gratefully acknowledges the support by the National
    Science Foundation under grant number 1308178.}}

\date{February 27, 2015}

\newtheorem{thm}{Theorem}
\newtheorem{lemma}[thm]{Lemma}

\newtheorem{prop}[thm]{Proposition}
{}
{}
\newtheorem{question}[thm]{Question}

\theoremstyle{remark}

\newtheorem{remark}[thm]{Remark}

\newtheorem*{definition*}{Definition}
\newtheorem*{remark*}{Remark}

\bibliographystyle{alpha}   

\def\Z{{\mathbb Z}}

\def\Q{{\mathbb Q}}

\renewcommand{\>}{\rangle}

\def\A{{\mathbb A}}
\begin{document}

\maketitle
\begin{abstract} We give another proof of a theorem of Hatcher and Vogtmann
  stating that the sequence $Aut(F_n)$ satisfies integral homological
  stability. The paper is for the most part expository, and we also
  explain Quillen's method for proving homological stability.\end{abstract}

\section{Introduction}

Let $G_1\subset G_2\subset G_3\subset\cdots$ be a sequence of
groups. For example, $G_n$ could be any of the following:
the permutation group $S_n$, or the signed permutation group $S_n^{\pm}$,
braid group $B_n$,
$SL_n(\Z)$,
$Aut(F_n)$,
and many other groups, with all inclusions standard. The sequence
satisfies {\it homological stability} if for every $r$ there is $n(r)$
such that for $n\geq n(r)$ inclusion induced $H_r(G_n)\to
H_r(G_{n+1})$ is an isomorphism. All of the above sequences satisfy
homological stability.

Homological stability of $Aut(F_n)$ over $\Q$ was proved by Hatcher
and Vogtmann by a very elegant argument \cite{MR1678155}, as
follows. First, they show that $Aut(F_n)$ acts properly on an $r$-connected
simplicial complex $S\A_{n,r+1}$, and second, that for $n>2r$ the quotient
spaces $Q_{n,r+1}=S\A_{n,r+1}/Aut(F_n)$ and $Q_{n+1,r+1}$ are {\it canonically
  homeomorphic}. Since
$$H_r(Aut(F_n);\Q)=H_r(Q_{n,r+1};\Q)=H_r(Q_{n+1,r+1};\Q)=H_r(Aut(F_{n+1};\Q))$$
stability follows.

This is a very transparent reason for stability, and I am not aware of
any other example where stability can be proved in this way.

\begin{question} Can one prove rational homological stability for
  braid groups or mapping class groups in the same way?
\end{question}

Integral stability of $Aut(F_n)$ is more subtle. It was first
established by Hatcher and Vogtmann in \cite{MR1678155} by studying a
spectral sequence associated to the action of $Aut(F_n)$ on the
complex of ``split factorizations'' of $F_n$. Further, it is known, by
the work of Hatcher, Vogtmann and Wahl, that $Aut(F_n)\to Out(F_n)$
induces an isomorphism in $H_r$ when $n\geq 2r+4$, see
\cite{MR1314940,MR1678155,MR1671188,MR2113904,MR2220689}. The proof is
based on Quillen's method and requires a rather delicate spectral
sequence argument.

In this note we give a proof of integral stability in the same spirit
as Hatcher-Vogtmann's proof of rational stability. We view the
quotient spaces $Q_{n,r+1}$ as {\it orbi spaces} and observe that for
$n>>r$ and canonical identification $Q_{n,r+1}=Q_{n+1,r+1}$ of
underlying topological spaces, the groups $\Gamma_{n,r+1}(x),\Gamma_{n+1,r+1}(x)$
associated to a point $x$ (i.e. stabilizers of corresponding points in
$S\A_{n,r+1},S\A_{n+1,r+1}$) themselves belong to a sequence of finite
groups satisfying homological stability. More precisely,
$\Gamma_{n,r}(x)=G_r(x)\times S^{\pm}_{n-2r}$ where $G_r(x)$ does not depend
on $n$, and $S^{\pm}_i$ is the signed permutation group on $i$
symbols. Integral stability of $Aut(F_n)$ easily follows. 

We emphasize here that we will use spectral sequences only to prove
homological stability for signed permutation groups. The rest of the
argument is geometric, in the spirit of Hatcher-Vogtmann
\cite{MR1678155}. 

The price we must pay for conceptual transparency is that our
stability range is far from optimal. The argument seems to require
$n>4r$ (while the best known estimate is $n\geq 2r+2$), although it is
possible that this may be improved with further effort. 

We note that
Galatius \cite{MR2784914} computed stable homology groups. 
For a more systematic approach to homological stability of
automorphism groups see \cite{wahl}.

{\bf Outline.} In Section \ref{how} we recall Quillen's method for
proving stability. We will only need the simple form where the group
acts on a highly connected space with one orbit of cells in a
dimension range. We then prove homological stability for permutation groups,
following an argument of Maazen, and we give a variant for signed
permutation groups. The latter groups naturally appear as subgroups of
$Aut(F_n)$ that act as symmetry groups of a rose. The final section
elaborates on the outline given above. Instead of working with orbi
spaces $X/G$, we use the Borel construction and consider $X\times_G
EG=(X\times EG)/G$ where $G$ acts diagonally on $X\times EG$. This is
technically more expedient, but the reader should keep the orbi space
picture in mind.

{\bf Acknowledgments.} I thank the referee for useful comments. I also
thank the organizers of MSJ-SI for giving me a nudge. I gave talks on
this proof some years ago but didn't get around to writing it up until
now.

\section{Quillen's method}\label{how}

Here we describe a method, due to Quillen (unpublished), to prove homological
stability. For published accounts of Quillen's method see
e.g. \cite{MR0472796,MR586429}. 

We will say that $X$ is {\it $r$-connected} if $\tilde H_i(X)=0$ for
$i\leq r$.

Fix a sequence $G_0\subset G_1\subset G_2\subset\cdots$ of groups.
Suppose that $H_i(G_{s-1})\to H_i(G_{s})$ is an isomorphism when
the following hold:
\begin{itemize}
\item $i=r-1$ and $s\geq n-2$, or
\item $i<r-1$ and $s\geq n+i-r-2$.
\end{itemize}

Also assume that the group $G_n$ acts on a $\Delta$-complex $X=X(n)$
of dimension $\leq n-1$ with the following properties.
\begin{enumerate}[(i)]
\item Action is without inversions, i.e. any element that leaves a
  simplex invariant fixes it pointwise.
\item $X$ is $r$-connected.
\item in each dimension $0,1,\cdots,r$ there is one orbit of
  simplices.
\item There is a flag of
  simplices $$\sigma^0<\sigma^1<\cdots<\sigma^r$$ in $X$ 
  such that
$Stab(\sigma^i)=G_{n-i-1}\subset G_n$.
\item If $\tau_1^{i}$ and $\tau_2^{i}$ are two $i$-simplices
  in $X$ contained in $\rho^{i+1}$ as faces ($i=0,1,\cdots,r-1$) then
  there exists $g\in G$ such that:
\begin{itemize}
\item $g(\tau_1)=\tau_2$, and
\item $g$ commutes with all elements of $Stab(\rho)$.
\end{itemize}
\end{enumerate}

\begin{remark}
In view of (iii) and (iv), we have that the stabilizer of {\it every}
$i$-simplex is a conjugate of $G_{n-i-1}$.  Note that conjugation
induces identity in the homology of a group. So we have a canonical
isomorphism $H_*(Stab(\tau^i))\cong H_*(Stab(\sigma^i))=H_*(G_{n-i-1})$ for
any $i$-simplex $\tau$ (by choosing any $g\in G$ with $g(\tau)=\sigma$
and passing to the isomorphism in homology induced by conjugation
$g_*:Stab(\tau)\to Stab(\sigma)$, $h\mapsto ghg^{-1}$; this isomorphism is
independent of the choice of $g$).

Property (v) guarantees that stabilizer inclusions
$Stab(\rho)\hookrightarrow Stab(\tau_j)$, $j=1,2$, induce the {\it
  same} homomorphism $H_*(G_{n-i-2})\to H_*(G_{n-i-1})$ in homology,
after the identifications in the previous paragraph. This follows by
considering the diagram

\begin{center}
\begin{tikzcd}
  Stab(\rho) \arrow[hookrightarrow]{rd} \arrow[hookrightarrow]{r} & Stab(\tau_1) \arrow{d}{g_*}\\
                                 & Stab(\tau_2)
\end{tikzcd}
\end{center}

\noindent
which commutes (by the assumption that $g_*$ fixes $Stab(\rho)$) and
passing to homology. 
By (iv) this is the homomorphism induced by inclusion
$G_{n-i-2}\hookrightarrow G_{n-i-1}$.
\end{remark}

\begin{prop}\label{quillen} 
Under the above assumptions
$H_r(G_{n-1})\to H_r(G_n)$ is an isomorphism.
\end{prop}

\begin{proof}
Consider the ``equivariant homology spectral sequence''. This is the
spectral sequence associated to the filtration of $Y=X\times_{G_n}
EG_n$ coming from the skeleta of $X$: $Y_p=X^p\times_{G_n}
EG_n$. Since $X$ is $r$-connected we have $H_r(Y)=H_r(G_n)$. The first
page
is $$E^1_{p,q}=H_{p+q}(Y_p,Y_{p-1})=H_q(Stab(\sigma^p))=H_q(G_{n-p-1})$$
with equalities legal since identifications are up to inner
automorphisms, which induce identity in homology. When $p$ is even the
differential $E^1_{p+1,q}\to E^1_{p,q}$ is 0 by (v) (a $(p+1)$-simplex
has an even number of $p$-faces, stabilizer inclusions are all standard,
half come with positive and half with negative sign). In particular,
$E^1_{0,q}=H_q(G_{n-1})$ survives to $E^2$. Likewise, when $p$ is odd
the differential $E^1_{p+1,q}\to E^1_{p,q}$ is the inclusion induced
$H_q(G_{n-p-2})\to H_q(G_{n-p-1})$. A portion of the first page is
pictured below. The leftmost column corresponds to $p=0$ and the top
row to $q=r$. 

\[\arraycolsep=1.4pt\def\arraystretch{2.2}
\begin{array}{ccccccccc}
H_r(G_{n-1})&\overset{0}\leftarrow & H_r(G_{n-2})&\leftarrow &
H_r(G_{n-3})& \\ H_{r-1}(G_{n-1})&\overset{0}\leftarrow &
\Ovalbox{$H_{r-1}(G_{n-2})$}&\overset{\cong}\leftarrow &
\Ovalbox{$H_{r-1}(G_{n-3})$}&\overset{0}\leftarrow\\ H_{r-2}(G_{n-1})&\overset{0}\leftarrow
& H_{r-2}(G_{n-2})&\overset{\cong}\leftarrow &
\Ovalbox{$H_{r-2}(G_{n-3})$}&\overset{0}\leftarrow &
\Ovalbox{$H_{r-2}(G_{n-4})$}&\overset{\cong}\leftarrow & H_{r-2}(G_{n-5})
\end{array}
\]

The circled terms are $E^1_{p,r-p}$ and $E^1_{p,r-p+1}$ with $p>0$ and
$q<r$, and they die thanks to our assumption that the differential
$d^1:E^1_{p+1,q}\to E^1_{p,q}$ is an isomorphism when $p$ is odd and
it involves at least one of the circled terms. It now follows that the
$E^2$ page has $E^2_{0,r}=H_r(G_{n-1})$ and all terms on the diagonals
$p+q=r$ and $p+q=r+1$ with $q<r$ vanish. Thus the same holds for
the $E^\infty$ page, and since the spectral sequence converges to
$H_{p+q}(Y)$ we have that $H_r(G_{n-1})=E^\infty_{0,r}\to H_r(Y)=H_r(G_n)$ is
an isomorphism, thus proving the Proposition.
\end{proof}

\subsection{Stability for (signed) permutation groups}
Stability for symmetric groups was established by Nakaoka
\cite{MR0112134,MR0131874}. We will follow Maazen's proof \cite{maazen}.
See also \cite{MR2155519}. We start with a couple of elementary
lemmas.

Call a nonempty polyhedron $X$ {\it $n$-spherical} if $\tilde
H_i(X)=0$ for $i\neq n$.

\begin{lemma}\label{2}
Let $X$ be a polyhedron and $X_i\subset X$ a finite collection of $m$
subpolyhedra covering $X$, and let $n\geq 0$ be an integer. Suppose
that for every $k=1,2,\cdots,m$ any $k$-fold intersection
$X_{i_1}\cap\cdots\cap X_{i_k}$ is $(n-k+1)$-spherical (empty for
$k\geq n+2$) whenever $i_1<i_2<\cdots<i_k$. Then $X$ is $n$-spherical.
\end{lemma}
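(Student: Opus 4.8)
The plan is to induct on the number $m$ of sets in the cover, using the Mayer--Vietoris sequence together with a downward shift of the parameter $n$. The base case $m=1$ is immediate: the hypothesis with $k=1$ says that $X=X_1$ is $(n-1+1)=n$-spherical. For the inductive step ($m\geq 2$), split the cover by setting $A = X_1\cup\cdots\cup X_{m-1}$ and $B=X_m$, so that $X=A\cup B$. After triangulating so that $A$, $B$, and $A\cap B$ are subpolyhedra, I have the reduced Mayer--Vietoris sequence
$$\cdots\to \tilde H_i(A\cap B)\to \tilde H_i(A)\oplus \tilde H_i(B)\to \tilde H_i(X)\to \tilde H_{i-1}(A\cap B)\to\cdots$$

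Next I would verify that each of the three pieces inherits a spherical hypothesis. The set $B=X_m$ is $n$-spherical by the $k=1$ case. The cover $\{X_1,\dots,X_{m-1}\}$ of $A$ satisfies exactly the original intersection hypotheses (being a sub-collection), so the inductive hypothesis for a cover of size $m-1$ with the same $n$ shows that $A$ is $n$-spherical. The crucial observation concerns $A\cap B=\bigcup_{j=1}^{m-1}(X_j\cap X_m)$: a $k$-fold intersection of the sets $Y_j:=X_j\cap X_m$ is a $(k+1)$-fold intersection of the original $X_i$, hence is $(n-(k+1)+1)=(n-k)$-spherical and empty once $k+1\geq n+2$, i.e.\ $k\geq n+1$. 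These are precisely the hypotheses of the lemma for a cover of size $m-1$ with parameter $n-1$, so the inductive hypothesis gives that $A\cap B$ is $(n-1)$-spherical.

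With all three estimates in place the conclusion is a diagram chase. For $i\neq n$ the term $\tilde H_i(A)\oplus \tilde H_i(B)$ vanishes, so the connecting map $\tilde H_i(X)\to \tilde H_{i-1}(A\cap B)$ is injective; but $\tilde H_{i-1}(A\cap B)=0$ unless $i-1=n-1$, which forces $\tilde H_i(X)=0$ for every $i\neq n$. Since $X\supseteq X_1\neq\emptyset$, this says exactly that $X$ is $n$-spherical.

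The \emph{main thing to get right} is the bookkeeping of the induction rather than any single hard step: passing to $A\cap B$ decreases $n$ by one, so I must organize this as a lexicographic induction on $(m,n)$ and treat the boundary $n=0$ directly, where the hypotheses force the $X_i$ to be pairwise disjoint $0$-spherical polyhedra and $X$ is their disjoint union, visibly $0$-spherical. I also rely on a standing nonemptiness point: for $n\geq 1$ every pairwise intersection is $(n-1)$-spherical, hence nonempty, so $A\cap B\neq\emptyset$ and the reduced sequence behaves as written. Conceptually the same conclusion falls out of the reduced Mayer--Vietoris/\v Cech spectral sequence $\tilde E^1_{p,q}=\bigoplus_{|S|=p+1}\tilde H_q(X_S)$, whose hypotheses concentrate it on the single antidiagonal $p+q=n$; but the inductive argument has the advantage of sidestepping the convergence and augmentation subtleties of that spectral sequence.
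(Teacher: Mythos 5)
Your proof is correct and follows essentially the same route as the paper: induction on the number $m$ of sets, splitting off one set of the cover (you peel off $X_m$, the paper peels off $X_1$), observing that $k$-fold intersections of the sets $X_j\cap X_m$ are $(k+1)$-fold intersections of the originals so the parameter drops to $n-1$, and concluding with Mayer--Vietoris. Your extra care with the boundary case $n=0$ and the nonemptiness of $A\cap B$ (needed for the reduced sequence) addresses points the paper's terser proof leaves implicit, but it is the same argument.
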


\begin{proof} Induction on $m$. If $m=1$ there is nothing to prove,
  and if $m=2$ the statement follows from 
  Mayer-Vietoris. Assume $m>2$. Let $Y=X_2\cup X_3\cup\cdots\cup
  X_m$. By induction $Y$ is 
  $n$-spherical and $$X_1\cap Y=(X_1\cap X_2)\cup (X_1\cap
  X_3)\cup\cdots\cup (X_1\cap X_m)$$ is 
$(n-1)$-spherical (again by induction). Thus the claim
  follows. 
\end{proof}

\begin{lemma} \label{3}
If $X$ is $n$-spherical and $F$ is a nonempty finite set
  then the join $X*F$ is $(n+1)$-spherical.
\end{lemma}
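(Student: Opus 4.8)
The plan is to induct on the cardinality $k=|F|$, applying Lemma \ref{2} with a two-element cover at each stage. Write $F=\{v_1,\dots,v_k\}$ and note that the join decomposes as $X*F=\bigcup_{i=1}^k (X*v_i)$: a point of $X*F$ outside $X$ has the form $(1-s)x\oplus s\,y$ with $s>0$, and since the vertex set $F$ is discrete the factor $y$ is a single $v_i$, so the point lies in the cone $X*v_i$. For the base case $k=1$, the join $X*v_1$ is the cone $CX$, which is contractible; hence $\tilde H_i(X*v_1)=0$ for all $i$, and in particular it is $(n+1)$-spherical.

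For the inductive step I would fix $v\in F$, set $F'=F\setminus\{v\}$ (nonempty since $k\geq 2$), and assume $X*F'$ is already known to be $(n+1)$-spherical. Cover $X*F$ by the two subpolyhedra $A=X*F'$ and $B=X*v=CX$. Their union is all of $X*F$, and their intersection is exactly the common base $X$, since any point of $B$ outside $X$ lies in the open cone with apex $v$, which meets $A$ only along $X$. Now apply Lemma \ref{2} with $m=2$ and target index $n+1$: the hypotheses require the single pieces $A$ and $B$ to be $(n+1)$-spherical, which holds by the inductive hypothesis and by contractibility of the cone respectively, and require the pairwise intersection $A\cap B=X$ to be $((n+1)-2+1)=n$-spherical, which is precisely the standing hypothesis on $X$. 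Lemma \ref{2} then gives that $X*F$ is $(n+1)$-spherical, completing the induction.

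I expect no serious obstacle here; the only points needing care are the geometric identification $A\cap B=X$ (so that the index shift in Lemma \ref{2} lands correctly) and the bookkeeping that the cone, having trivially vanishing reduced homology, counts as $(n+1)$-spherical. One cannot shortcut the induction by feeding the full cone cover $\{X*v_i\}$ into Lemma \ref{2} all at once: for $k\geq 3$ the triple intersections are again $X$, which the lemma would then require to be $(n-1)$-spherical, and that fails. As an alternative route one could invoke the reduced join formula $\tilde H_i(X*F)\cong \tilde H_{i-1}(X)\otimes\tilde H_0(F)$, valid with no Tor contribution since $\tilde H_*(F)$ is free and concentrated in degree $0$, from which $(n+1)$-sphericality is immediate by the degree shift.
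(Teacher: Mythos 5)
Your proposal is correct and takes essentially the same route as the paper: both argue by induction on $|F|$, decomposing $X*F$ as the union of the cone $X*v$ and the smaller join $X*(F\setminus\{v\})$ with intersection exactly $X$, the paper invoking Mayer--Vietoris directly where you invoke Lemma \ref{2} with $m=2$ (which, by the paper's own proof of that lemma, \emph{is} Mayer--Vietoris). The only cosmetic difference is that the paper handles $|F|=2$ as a separate base case via the suspension isomorphism, whereas your inductive step absorbs it.
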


\begin{proof}
By induction on the cardinality of $F$. If $|F|=1$ then $X*F$ is
contractible. If $|F|=2$ then $X*F$ is the suspension and $\tilde
H_{i+1}(X*F)=\tilde H_i(X)$ so the claim follows. When $|F|>2$ write
$X*F$ as the union of two sets whose intersection is $X$, with one set
contractible and the other $(n+1)$-spherical by induction (join of $X$
and the set $F$ without one of the points). Then use Mayer-Vietoris.
\end{proof}

\begin{prop}
The sequence of symmetric groups $S_n$ satisfies homological
stability:
$$H_r(S_{n-1})\to H_r(S_n)$$ is an isomorphism for $n>2r$.
\end{prop}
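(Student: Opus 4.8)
The plan is to realize $S_n$ as acting on the complex of injective words and to invoke Proposition \ref{quillen}. Let $X=X(n)$ be the $\Delta$-complex whose $p$-simplices are the injective words $(a_0,a_1,\ldots,a_p)$, i.e.\ ordered $(p+1)$-tuples of distinct elements of $\{1,\ldots,n\}$, with the $j$-th face obtained by deleting $a_j$. The group $S_n$ acts by $g\cdot(a_0,\ldots,a_p)=(ga_0,\ldots,ga_p)$, $\dim X=n-1$, and I would verify the hypotheses (i)--(v) of Proposition \ref{quillen} with $G_n=S_n$. Since a simplex is an \emph{ordered} tuple, an element preserving it fixes it pointwise, giving (i). The action is transitive on injective words of each fixed length, giving (iii); taking $\sigma^i=(n,n-1,\ldots,n-i)$ gives a flag $\sigma^0<\sigma^1<\cdots<\sigma^r$ with $Stab(\sigma^i)=S_{n-i-1}$ (the permutations of the remaining $n-i-1$ letters), which is (iv).

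For (v), given an $(i+1)$-simplex $\rho$ with underlying letter set $A$ and two $i$-faces $\tau_1,\tau_2$ (each obtained by deleting one letter of $A$), I would take $g\in S_n$ to be the permutation supported on $A$ that carries the ordered tuple $\tau_1$ to $\tau_2$ entrywise and sends the letter omitted by $\tau_1$ to the letter omitted by $\tau_2$. This is a well-defined bijection of $A$ because $\tau_1$ and $\tau_2$ have the same length, and since $g$ is supported on $A$ while $Stab(\rho)$ permutes the complement of $A$, the two have disjoint support and hence commute; this is exactly (v).

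The geometric heart is (ii): I would show $X(n)$ is $(n-1)$-spherical, hence $(n-2)$-connected in the homological sense used here, by induction on $n$ via Lemmas \ref{2} and \ref{3}. Cover $X(n)$ by the subcomplexes $X_a$ ($a=1,\ldots,n$), where $X_a$ consists of the words that either avoid $a$ or end in $a$; one checks $X_a$ is closed under faces and that the $X_a$ cover $X(n)$, since a nonempty word lies in $X_b$ for $b$ its last letter. For distinct $a_1<\cdots<a_k$ with complement $S'$ of size $n-k$, a word lies in $X_{a_1}\cap\cdots\cap X_{a_k}$ iff it avoids all the $a_j$ or ends in exactly one of them; this intersection is precisely the join $X(S')*\{a_1,\ldots,a_k\}$ of the injective-word complex on $S'$ with $k$ discrete points. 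By the inductive hypothesis $X(S')$ is $(n-k-1)$-spherical, so Lemma \ref{3} makes the intersection $(n-k)$-spherical, and Lemma \ref{2} then yields that $X(n)$ is $(n-1)$-spherical. In particular $X(n)$ is $r$-connected once $n\geq r+2$.

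Finally I would run an induction on $r$ to supply the numerical hypotheses of Proposition \ref{quillen}. Assuming the statement for all $r'<r$, for $i=r-1$ the inductive hypothesis gives the isomorphism $H_{r-1}(S_{s-1})\to H_{r-1}(S_s)$ for $s>2r-2$, which covers the required range $s\geq n-2$ since $n>2r$; for $i<r-1$ it gives the isomorphism for $s>2i$, which covers $s\geq n+i-r-2$ because $n-r-2>r-2\geq i$. As $n>2r$ also guarantees $n\geq r+2$ (so (ii) holds) and $\dim X\leq n-1$, Proposition \ref{quillen} applies and yields that $H_r(S_{n-1})\to H_r(S_n)$ is an isomorphism, completing the induction. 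I expect the main obstacle to be the connectivity step (ii)---specifically, designing the cover so that the multiple intersections are \emph{exactly} joins, so that Lemmas \ref{2} and \ref{3} combine cleanly; the verification of (v) and the numerical bookkeeping are then routine.
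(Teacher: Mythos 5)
Your proposal is correct and follows essentially the same route as the paper: the same complex of injective words, the same cover by subcomplexes indexed by letters (you use ``ends in $a$'' where the paper uses ``first vertex is $i$'', a mirror-image of the same idea), the same join identification fed into Lemmas \ref{2} and \ref{3}, and the same induction on $r$ feeding Proposition \ref{quillen}. In fact you supply details---the verification of (i)--(v) and the numerical bookkeeping---that the paper explicitly leaves to the reader.
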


\begin{proof}
We argue by induction on $r$, starting with the obvious $r=0$. We will
apply Proposition \ref{quillen} with $G_n=S_n$. Note
that the fact that $H_i(S_{s-1})\to H_i(S_s)$ is an isomorphism when
$i=r-1$ and $s\geq n-2$, or when $i<r-1$ and $s\geq n+i-r-2$ follows
by induction (for the latter case the calculation is $s\geq
n+i-r-2>2r+i-r-2=r+i-2\geq 2i$). 
Consider the $\Delta$-complex $X=X(n)$ whose
vertices are $1,2,\cdots,n$ and there is an (ordered) simplex for
every ordered subset of the vertex set. So e.g. there are $n(n-1)$
edges etc. 

{\bf Claim.} $X(n)$ is $(n-1)$-spherical.

{\bf Proof of Claim.} Let $X_i$ be the union of (closed) simplices in
$X$ whose first vertex is $i$. Thus $X_i$ contains all simplices whose
vertices don't include $i$ (and it is the cone on this
subcomplex). More generally, the $k$-fold intersection of the $X_i$'s
can be identified with the subcomplex (the ``base'') consisting of
simplices that don't involve $k$ particular vertices, with $k$ cones
attached to it. Since the base is a copy of $X(n-k)$ it is
$(n-k-1)$-spherical by induction. It follows from Lemma \ref{3} that
$k$-fold intersections are $(n-k)$-spherical. Now Lemma \ref{2}
implies that
$X=X(n)$ is $(n-1)$-spherical.

The verification of (i)-(v) is left to the reader. Thus stability follows.
\end{proof}

There is an identical proof for the signed permutation group $S_n^\pm$
(or the
hyperoctahedral group), i.e. the Coxeter group of type $B_n$. Recall
that $S_n^\pm$ is the semi-direct product $S_n\ltimes \Z_2^n$ 
and it can be viewed as the group of permutations $\pi$ of the set
$\{-n,-(n-1),\cdots,-1,1,\cdots,n-1,n\}$ such that $\pi(-x)=-\pi(x)$
for all $x$.

\begin{prop}[\cite{MR2736166}]\label{6}
The signed permutation groups satisfy homological stability:
$$H_r(S_{n-1}^\pm)\to H_r(S_n^\pm)$$ is an isomorphism for $n>2r$.
\end{prop}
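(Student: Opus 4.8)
The plan is to follow the proof for $S_n$ verbatim, applying Proposition \ref{quillen} with $G_n=S_n^\pm$ and inducting on $r$ (the case $r=0$ being trivial). The hypotheses of Proposition \ref{quillen} concerning the maps $H_i(S_{s-1}^\pm)\to H_i(S_s^\pm)$ in the ranges $i=r-1,\ s\geq n-2$ and $i<r-1,\ s\geq n+i-r-2$ follow from the inductive hypothesis by exactly the same arithmetic as in the symmetric case. The space to use is the $\Delta$-complex $X=X^\pm(n)$ whose vertices are the $2n$ signed symbols $\{\pm1,\dots,\pm n\}$ and whose ordered simplices are the ordered tuples of signed symbols with \emph{distinct absolute values} (so $i$ and $-i$ never occur together). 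This complex has dimension $\leq n-1$, and the induction-driving step is the

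\smallskip
\noindent\textbf{Claim.} $X^\pm(n)$ is $(n-1)$-spherical.

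\smallskip
\noindent Granting the Claim, $X$ is $(n-2)$-connected, hence $r$-connected whenever $r\leq n-2$, which holds in our range $n>2r$; so condition (ii) is met.

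The Claim is where the signed case genuinely differs from the symmetric one, and this is the step I expect to be the main obstacle. The naive imitation---covering $X$ by the $2n$ sets $X_a$ (union of closed simplices with first vertex $a$), one for each signed symbol---fails, because $X_a=\{\text{first vertex }a\}\cup\{\text{simplices avoiding }\pm a\}$, and in an intersection $X_a\cap X_{-a}$ the two ``first-vertex'' options exclude each other while the two ``avoiding'' options coincide; the intersection collapses to $\{\text{avoid }\pm a\}$ and the sphericity bookkeeping of Lemma \ref{2} breaks. The fix is to cover instead by the $n$ sets indexed by \emph{unsigned} values, $X_v:=X_{+v}\cup X_{-v}$ for $v=1,\dots,n$; these still cover $X$ since every nonempty simplex has a first vertex. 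With this cover the $k$-fold intersection of $X_{v_1},\dots,X_{v_k}$ is identified with $X^\pm(n-k)*\{\pm v_1,\dots,\pm v_k\}$, the join of the ``base'' of simplices avoiding all $\pm v_j$ with the discrete $2k$-point set of admissible first vertices. By induction on $n$ the base $X^\pm(n-k)$ is $(n-k-1)$-spherical, so Lemma \ref{3} gives that each such $k$-fold intersection is $(n-k)$-spherical, and Lemma \ref{2} then yields that $X^\pm(n)$ is $(n-1)$-spherical (the base case $X^\pm(1)$ being two points, hence $0$-spherical).

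It remains to verify (i)--(v) of Proposition \ref{quillen}, and these are routine, just as in the symmetric case. The action is without inversions since $S_n^\pm$ permutes ordered simplices and fixing an ordered simplex forces fixing each vertex; transitivity on ordered $k$-simplices for $k\leq r<n$ gives the single orbit in (iii); the flag $\sigma^i=(+1,+2,\dots,+(i+1))$ has stabilizer fixing $\pm1,\dots,\pm(i+1)$ and acting as $S^\pm_{n-i-1}$ on the remaining symbols, giving (iv); and for (v), given two $i$-faces $\tau_1,\tau_2$ of an $(i+1)$-simplex $\rho$ one takes $g$ to be a signed permutation supported on the absolute values occurring in $\rho$, which realizes the required vertex permutation and commutes with $Stab(\rho)$ by disjointness of supports. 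Proposition \ref{quillen} then gives that $H_r(S_{n-1}^\pm)\to H_r(S_n^\pm)$ is an isomorphism for $n>2r$, completing the induction.
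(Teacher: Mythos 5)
Your proposal is correct and is essentially the paper's own argument: your cover by the $n$ sets $X_v=X_{+v}\cup X_{-v}$ indexed by unsigned values is exactly the paper's prescription ``take $X_i$ to consist of simplices that start with $i$ or $-i$,'' and the sphericity proof via Lemmas \ref{2} and \ref{3} then runs just as in the symmetric-group case, as the paper intends. The only difference is one of detail: you spell out the identification of $k$-fold intersections with the join of $X^\pm(n-k)$ and the $2k$ cone points, the failure of the naive $2n$-set cover, and the verification of conditions (i)--(v) of Proposition \ref{quillen}, all of which the paper leaves to the reader.
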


\begin{proof}
Now let $X=X(n)$ be the $\Delta$-complex with vertex set
$-n,-(n-1),\cdots,-1,1,2,\cdots,n$ and a simplex is an ordered subset
with distinct absolute values. The proof that $X$ is $(n-1)$-spherical
is the same: take $X_i$ to consist of simplices that start with $i$ or
$-i$. 
\end{proof}

\begin{remark}
There is one more infinite sequence of Weyl groups, namely
of type $D_n$. This is the group of signed permutations with an even
number of negative signs, and it has index 2 in $S_n^{\pm}$. One can
prove homological stability as above, by considering the action on the
same complex as for $S_n^{\pm}$ (there are now two orbits of simplices
in the top dimension). For a generalization of this method to other
sequences of Coxeter groups see \cite{Coxeter}.
\end{remark}

\section{Integral homological stability of $Aut(F_n)$}

The following is well-known.

\begin{prop}\label{1}
Let $f:X\to Y$ be a map between spaces equipped with filtrations
$$\emptyset=X_{-1}\subset X_0\subset X_1\subset\cdots\subset X_m=X$$
and
$$\emptyset=Y_{-1}\subset Y_0\subset Y_1\subset\cdots\subset Y_m=Y$$
such that $f(X_i)\subset Y_i$ for all $i$. If 
$f_*:H_j(X_i,X_{i-1})\to H_j(Y_i,Y_{i-1})$ is an isomorphism for all
$i=0,\cdots,m$ and $j\leq k+1$, then $f_*:H_j(X)\to H_j(Y)$ is an
isomorphism for $j\leq k$.
\end{prop}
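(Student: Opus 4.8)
The plan is to prove this via the relative homology long exact sequences and the five lemma, comparing the two filtrations level by level.The plan is to prove the statement by induction on the filtration level $i$, comparing the long exact sequences of the pairs $(X_i,X_{i-1})$ and $(Y_i,Y_{i-1})$ through the five lemma. The precise inductive claim I would set up is that $f_*\colon H_j(X_i)\to H_j(Y_i)$ is an isomorphism for all $j\leq k$, for each $i$; taking $i=m$ then yields the conclusion, since $X_m=X$ and $Y_m=Y$. The base case is $i=-1$, where $X_{-1}=Y_{-1}=\emptyset$ and all reduced homology vanishes, so there is nothing to verify.

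For the inductive step I would assume the claim at level $i-1$ and deduce it at level $i$. The map $f$ induces a morphism between the long exact sequences of the two pairs, producing a commutative ladder whose relevant portion compares, in descending degree,
$$H_{j+1}(X_i,X_{i-1})\to H_j(X_{i-1})\to H_j(X_i)\to H_j(X_i,X_{i-1})\to H_{j-1}(X_{i-1})$$
with the corresponding row for $Y$. To conclude that the central vertical map $H_j(X_i)\to H_j(Y_i)$ is an isomorphism, I would invoke the five lemma: it suffices that the vertical map on $H_{j+1}(X_i,X_{i-1})$ be surjective, that the maps on $H_j(X_{i-1})$ and on $H_j(X_i,X_{i-1})$ be isomorphisms, and that the map on $H_{j-1}(X_{i-1})$ be injective.

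The heart of the matter, and the step demanding the most care, is the degree bookkeeping that checks these four conditions throughout the range $j\leq k$. The map on the relative group $H_j(X_i,X_{i-1})$ is an isomorphism for $j\leq k+1$ by hypothesis; this settles the $H_j$ relative term directly and, one degree higher, supplies the required surjectivity of the $H_{j+1}$ relative term exactly when $j\leq k$. This is precisely where the extra degree $k+1$ in the hypothesis is consumed, and it explains why the absolute conclusion must stop one degree short of the relative hypothesis. The two absolute terms $H_j(X_{i-1})$ and $H_{j-1}(X_{i-1})$ are isomorphisms, hence in particular injective, by the inductive hypothesis, since $j-1\leq j\leq k$. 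With all four conditions in place, the five lemma delivers the isomorphism on $H_j(X_i)$ for $j\leq k$, completing the induction. The only genuine subtlety is confirming that the top-degree surjectivity requirement at $j=k$ reaches into the relative homology at level $k+1$ rather than imposing an unavailable demand on the inductive hypothesis.
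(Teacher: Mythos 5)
Your proof is correct and takes essentially the same approach as the paper: an elementary induction along the filtration, applying the five lemma to the commutative ladder of long exact sequences, with the relative hypothesis in degree $k+1$ consumed exactly at the surjectivity corner, as you note. The only cosmetic difference is that the paper inducts on the width $p$, proving $H_j(X_i,X_{i-p})\to H_j(Y_i,Y_{i-p})$ is an isomorphism for $j\leq k$ via long exact sequences of triples $(X_i,X_{i-1},X_{i-p})$, whereas you induct on the level $i$ with absolute groups; since $H_j(X_i)=H_j(X_i,X_{-1})$, your ladder is the paper's ladder specialized to bottom term $\emptyset$.
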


\begin{proof}
This can be easily proved via spectral sequences, but we will give an
elementary proof. By induction on $p=1,2,\cdots,m+1$ we will prove that 
$$f_*:H_j(X_{i},X_{i-p})\to H_j(Y_i,Y_{i-p})$$
is an isomorphism for $p-1\leq i\leq m$ and $j\leq k$. For $p=1$
this is a hypothesis. For $p>1$ we consider the long exact sequences
of triples $(X_i,X_{i-1},X_{i-p})$ and $(Y_i,Y_{i-1},Y_{i-p})$, and
the map between them induced by $f$. 
\[\arraycolsep=0.8pt\def\arraystretch{1}
\begin{array}{ccccccccc}
H_{j+1}(X_i,X_{i-1})&\to &H_j(X_{i-1},X_{i-p})&\to
&H_j(X_i,X_{i-p}) &\to &
H_j(X_i,X_{i-1}) &\to & H_{j-1}(X_{i-1},X_{i-p})\\

\downarrow & & \downarrow & & \downarrow & & \downarrow & & \downarrow\\

H_{j+1}(Y_i,Y_{i-1})&\to &H_j(Y_{i-1},Y_{i-p})&\to
&H_j(Y_i,Y_{i-p}) &\to &
H_j(Y_i,Y_{i-1}) &\to & H_{j-1}(Y_{i-1},Y_{i-p})
\end{array}
\]
The inductive step now follows from the 5-lemma, and the Proposition
from the case $p=m+1$.
\end{proof}

\begin{prop}\label{9}
Let $(X',X)$ be a finite dimensional simplicial pair, $G'$ a group and
$G<G'$ a subgroup. Suppose that
\begin{enumerate}[(i)]
\item $G'$ acts on $X'$ without inversions,
\item $G<G'$ leaves $X$ invariant,
\item both $X,X'$ are $k$-connected,
\item every $G'$-orbit intersects $X$,
\item if two simplices of $X$ are in the same $G'$-orbit, then they are
  in the same $G$-orbit,
\item for every simplex $\sigma\in X$ the inclusion
  $Stab_G(\sigma)\hookrightarrow Stab_{G'}(\sigma)$ induces an
  isomorphism $H_j(Stab_G(\sigma))\to H_j(Stab_{G'}(\sigma))$ for
  $j\leq k-\dim\sigma$.
\end{enumerate}
Then inclusion $G\hookrightarrow G'$ induces an isomorphism $H_j(G)\to
H_j(G')$ for $j\leq k$.
\end{prop}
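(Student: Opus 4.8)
The plan is to translate the statement into a comparison of two Borel constructions and then run the filtered comparison of Proposition \ref{1}. Set $Y=X\times_G EG$ and $Y'=X'\times_{G'}EG'$, and let $f\colon Y\to Y'$ be the map induced by $G\hookrightarrow G'$, by $X\hookrightarrow X'$, and by a $G$-equivariant map $EG\to EG'$ (equivalently, use $EG'$ as a model for $EG$). The projections $Y\to BG$ and $Y'\to BG'$ are fibrations with fibers $X$ and $X'$; since these are $k$-connected by (iii), the Serre spectral sequence collapses in low degree and gives $H_j(Y)\cong H_j(G)$ and $H_j(Y')\cong H_j(G')$ for $j\le k$, compatibly with the projections to $BG$ and $BG'$. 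Under these identifications $f_*$ becomes exactly the inclusion-induced map $H_j(G)\to H_j(G')$, so it suffices to show $f_*\colon H_j(Y)\to H_j(Y')$ is an isomorphism for $j\le k$.

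Next I filter by skeleta, $Y_p=X^p\times_G EG$ and $Y'_p=(X')^p\times_{G'}EG'$, so that $f(Y_p)\subset Y'_p$. Since $G\subset G'$ inherits the no-inversions property (i), stabilizers fix their cells pointwise, and one has the standard identification (the same one furnishing the $E^1$ page in the proof of Proposition \ref{quillen})
$$H_n(Y_p,Y_{p-1})\cong\bigoplus_{[\sigma]}H_{n-p}(Stab_G(\sigma)),\qquad H_n(Y'_p,Y'_{p-1})\cong\bigoplus_{[\sigma']}H_{n-p}(Stab_{G'}(\sigma')),$$
where $[\sigma]$ and $[\sigma']$ run over $G$-orbits and $G'$-orbits of $p$-cells, and $f_*$ respects the summand decomposition.

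Then I identify $f_*$ on these relative groups. By (iv) every $G'$-orbit of $p$-cells is represented in $X$, and by (v) two $p$-cells of $X$ in one $G'$-orbit already lie in one $G$-orbit; hence the natural map from $G$-orbits of $p$-cells of $X$ to $G'$-orbits of $p$-cells of $X'$ is a bijection, and I may choose every representative $\sigma$ to lie in $X$. On the corresponding summand $f_*$ is the inclusion-induced map $H_{n-p}(Stab_G(\sigma))\to H_{n-p}(Stab_{G'}(\sigma))$, which by (vi) is an isomorphism exactly when $n-p\le k-\dim\sigma=k-p$, i.e.\ when $n\le k$. Thus $f_*\colon H_n(Y_p,Y_{p-1})\to H_n(Y'_p,Y'_{p-1})$ is an isomorphism for every $p$ and every $n\le k$.

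Finally I feed this into Proposition \ref{1}, and here lies the one delicate point, which I expect to be the main obstacle. Proposition \ref{1} produces an isomorphism on $H_j$ for $j\le k$ out of relative isomorphisms for $j\le k+1$, whereas the mechanism above supplies them cleanly only for $j\le k$. Inspecting the five-lemma step in the proof of Proposition \ref{1} (equivalently, comparing the two skeletal spectral sequences) shows that the degree-$\le k$ relative isomorphisms already force $f_*$ to be an isomorphism through degree $k-1$ and to be \emph{surjective} in degree $k$; what is missing is \emph{injectivity} in the top degree $k$, and this corresponds precisely to surjectivity of the relative maps in degree $k+1$, i.e.\ to the summands with $q=k+1-p\ge 1$ having $H_q(Stab_G(\sigma))\to H_q(Stab_{G'}(\sigma))$ onto. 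This is the step where one must use the hypotheses beyond a formal diagram chase: one either reads (vi) as also giving surjectivity in the single extra degree $j=k-\dim\sigma+1$ (the strengthening that the $k$-connectivity of the pair typically makes available), or settles for the range $j\le k-1$. Organizing the argument so that the top degree is captured, rather than the routine orbit-and-stabilizer bookkeeping, is what I expect to be the crux.
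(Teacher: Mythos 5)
Your proposal is the paper's proof: the same reduction via (iii) to comparing Borel constructions $X\times_G EG\to X'\times_{G'}EG'$, the same filtration by skeleta, the same identification $H_j(Y_p,Y_{p-1})\cong\bigoplus_{\sigma^p\in X/G}H_{j-p}(Stab_G(\sigma^p))$, the same matching of index sets via (iv) and (v) and of summands via (vi), and the same appeal to Proposition \ref{1}.

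The ``delicate point'' you isolate is genuine, and you should not regard it as a defect of your write-up: the paper's own proof silently commits exactly this off-by-one. It invokes Proposition \ref{1}, whose hypothesis requires the relative maps to be isomorphisms for $j\le k+1$, but verifies them only through (vi), which (as you compute) gives isomorphisms precisely for $j\le k$: the summand of $H_j(Y_p,Y_{p-1})$ attached to a $p$-simplex $\sigma$ is $H_{j-p}(Stab_G(\sigma))$, and (vi) controls it only when $j-p\le k-\dim\sigma=k-p$. Your accounting of what the weaker input yields --- isomorphisms through degree $k-1$, surjectivity in degree $k$, with injectivity in degree $k$ hinging on surjectivity of the relative maps in degree $k+1$ --- is exactly right: in the five-lemma step of Proposition \ref{1} the degree-$(k+1)$ relative groups enter only as the leftmost term of the ladder, where only surjectivity is used. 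So the minimal repair is the one you suggest: strengthen (vi) to also require surjectivity of $H_{k+1-\dim\sigma}(Stab_G(\sigma))\to H_{k+1-\dim\sigma}(Stab_{G'}(\sigma))$ (or an isomorphism for $j\le k+1-\dim\sigma$); alternatively, weaken the conclusion to $j\le k-1$. Either repair is harmless downstream, since in the application to $Aut(F_n)$ the stabilizer inclusions are of the form $D\times S_m^\pm\hookrightarrow D\times S_{m+1}^\pm$ and are homology isomorphisms in a range governed by the number $m$ of loops, so one pays at worst a slightly larger stability range --- a cost the paper already accepts as non-optimal. In short, your argument is correct as far as it goes and, on this point, more careful than the source.
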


Note that (iv) and (v) say that the induced map $X/G\to X'/G'$ is a
homeomorphism, and (vi) says that groups associated to the ``same''
point in the orbi spaces $X/G$ and $X'/G'$ have the same homology in a range.

\begin{proof}
By (iii) we have $H_j(G)=H_j(X\times_G EG)$ and
similarly for $G'$, so it suffices to argue that the map
$$X\times_G EG\to X'\times_{G'} EG'$$
induced by $f$ is an isomorphism in $H_j$ for $j\leq k$. We will apply
Proposition \ref{1} to this map and the filtrations induced by the
skeleta; thus $(X\times_G EG)_i=X^i\times_G EG$ and similarly for the
target space. We have 
$$H_j((X\times_G EG)_i,(X\times_G EG)_{i-1})=H_j((X^i,X^{i-1})\times_G
EG)=\bigoplus_{\sigma^i\in X/G}H_{j-i}(Stab_G(\sigma^i))$$
with the similar calculation for $X'$. By (iv) and (v) both sums are over the
same set of $i$-simplices in $X/G=X'/G'$, and by (vi) homology groups
are equal.
\end{proof}

\subsection{Review of the Degree Theorem}

Here we review the Hatcher-Vogtmann Degree Theorem
\cite{MR1678155}. For a simpler proof of this theorem see
\cite{MR3177173}. Let $S\A_n$ denote the {\it spine of reduced Auter
  space in rank $n$}. This is a simplicial complex whose vertices are
basepointed marked graphs $(\Gamma,v_0,\phi)$ where:
\begin{itemize}
\item
$\Gamma$ is a finite connected graph (i.e. a 1-dimensional cell complex)
  without separating edges,
\item $v_0\in \Gamma$ is a base vertex, it has valence $>1$, and all
  other vertices have valence $>2$,
\item
$\phi:F_n\to\pi_1(\Gamma,v_0)$ is
an isomorphism (called a {\it marking}). 
\end{itemize}

Two triples $(\Gamma,v_0,\phi)$ and $(\Gamma',v_0',\phi')$ represent
the same vertex of $S\A_n$ if there is a basepoint-preserving graph
isomorphism $I:\Gamma\to\Gamma'$ with $\phi'=I_*\phi$.

We write $(\Gamma,v_0,\phi)>(\Gamma',v_0',\phi')$ if there is a forest
(subgraph with contractible components)
$F\subset\Gamma$ so that $(\Gamma',v_0',\phi')$ is equivalent to
the graph obtained from $\Gamma$ by
collapsing all components of $F$, with the induced base vertex and
marking.  

The simplicial complex $S\A_n$ is the poset of this order relation,
i.e. a simplex is an ordered chain.  It is contractible \cite{CV}. The
group $Aut(F_n)$ acts on $S\A_n$ by precomposing the marking and the
action is proper without inversions. The stabilizer of a vertex is
equal to the symmetry group of the underlying basepointed
graph. 

The {\it degree} of a basepointed graph $(\Gamma,v_0)$ is the sum
$$deg(\Gamma)=\sum_{v\neq v_0}|v|-2$$
where $|v|$ denotes the valence of $v$ and the sum runs over all
vertices of $\Gamma$ distinct from $v_0$. Denote by $S\A_{n,k+1}$ the
subcomplex of $S\A_n$ spanned by graphs with degree $\leq k+1$. This
subcomplex is $Aut(F_n)$-invariant. Note also that collapsing a forest
cannot increase the degree.

\begin{thm}  [\cite{MR1678155}]
$S\A_{n,k+1}$ is $k$-connected. 
\end{thm}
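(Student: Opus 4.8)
The plan is to leverage the contractibility of the full spine $S\A_n$ (Culler--Vogtmann) and to push it down into the degree sublevel complex $S\A_{n,k+1}$ by a Morse-theoretic sweep, using $\deg$ as the height function. Since $S\A_n$ is contractible and $S\A_{n,k+1}$ is obtained from it by deleting all vertices of degree $>k+1$, it suffices to show that the inclusion $S\A_{n,k+1}\hookrightarrow S\A_n$ is $k$-connected; then $\pi_i(S\A_{n,k+1})\cong\pi_i(S\A_n)=0$ for $i\le k$. I would carry out the deletion one degree level at a time, from the top down, so that the only question is how the homotopy type changes when a single high-degree vertex is removed.

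The effect of deleting a vertex $w=(\Gamma,v_0,\phi)$ is governed by its descending link $\mathrm{lk}_\downarrow(w)$: the part of its link spanned by vertices still present, i.e.\ those of strictly smaller degree. Since $S\A_n$ is the order complex of the collapse poset $P$, the link of $w$ is the join $\Delta(P_{<w})*\Delta(P_{>w})$ of its collapses and its blow-ups; because a blow-up never decreases degree, the blow-up factor has already been removed, and $\mathrm{lk}_\downarrow(w)$ is the order complex of the degree-reducing collapses of $\Gamma$. Writing $\overline{\mathrm{st}}(w)=w*\mathrm{lk}_\downarrow(w)$ for the (contractible) closed star, a Mayer--Vietoris and van Kampen comparison of $S\A_n$ with $S\A_n\setminus\{w\}$ shows that the deletion preserves $k$-connectivity provided $\mathrm{lk}_\downarrow(w)$ is $k$-connected. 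Thus the whole theorem reduces to proving that, for every graph $\Gamma$ with $\deg(\Gamma)>k+1$, the complex of degree-reducing collapses is $k$-connected.

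This last connectivity estimate is the step I expect to be the main obstacle. The intuition is that a graph of degree $>k+1$ necessarily has enough valence concentrated away from $v_0$ that there are many independent ways to lower the degree --- collapsing edges toward the basepoint, or splitting a high-valence vertex --- and these reductions assemble into a highly connected complex. I would make this precise by covering $\mathrm{lk}_\downarrow(w)$ by the subcomplexes of reductions through each admissible direction and estimating sphericity with the two lemmas already in hand: Lemma \ref{3} raises connectivity upon joining with the finite set of splitting directions available at a vertex, while the nerve-type gluing of Lemma \ref{2} controls the union over all such directions. The delicate points will be to choose the notion of admissible descending move so that the degree genuinely strictly decreases, to verify that the intersection pattern of the covering meets the sphericity hypotheses of Lemma \ref{2}, and to check that the resulting bound is at least $k$ uniformly for all $w$ of degree $>k+1$ --- ideally growing with $\deg(\Gamma)-(k+1)$, which would make the induction robust.
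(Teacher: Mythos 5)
Your proposal has a fatal gap at exactly the step you flag as ``the main obstacle,'' and it is worth noting first that the paper itself offers no proof to compare against: the theorem is quoted from Hatcher--Vogtmann \cite{MR1678155} (with a simpler proof referenced in \cite{MR3177173}), so your argument must stand on its own. It does not, because the key lemma your sweep reduces to --- that for every $(\Gamma,v_0)$ with $\deg(\Gamma)>k+1$ the complex of degree-reducing collapses is $k$-connected --- is false, already for $k=0$. Compute the effect of collapsing a forest $F\subset\Gamma$: if $T_0$ is the component of $F$ containing $v_0$, the degree drops by exactly $\sum_{v\in T_0,\,v\neq v_0}(|v|-2)$, so a collapse reduces degree if and only if $F$ contains an edge incident to $v_0$. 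Now take $\Gamma$ in which $v_0$ has valence $2$, with both edges $e_1,e_2$ running to the same vertex $a$, and attach at $a$ a graph of arbitrarily large degree (all non-basepoint valences $\geq 3$, no separating edges --- e.g.\ a necklace of doubled edges through $a$). Since $e_1\cup e_2$ is a cycle, no forest contains both edges, so the degree-reducing forests split into two nonempty families, those containing $e_1$ and those containing $e_2$, and no member of one family is nested with a member of the other. Hence $\mathrm{lk}_\downarrow(w)$ is a disjoint union of two nonempty order complexes: it is disconnected, hence not even $0$-connected, no matter how large $\deg(\Gamma)-(k+1)$ is. Your hoped-for bound ``growing with $\deg(\Gamma)-(k+1)$'' therefore cannot exist, and the Mayer--Vietoris step of the sweep cannot be carried out at such a vertex.

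This is not a repairable technicality; it is the reason the Degree Theorem is genuinely hard: degree is not a Morse function whose descending links control its sublevel complexes. (There is also a secondary issue you elide: collapses whose forest misses $v_0$ preserve degree, so vertices of equal degree can be adjacent in $S\A_n$; removing ``one degree level at a time'' then requires tie-breaking, and for the last vertex removed in a level the relevant link is precisely the degree-reducing collapse complex, so the counterexample above cannot be dodged by reordering within levels.) The actual proofs proceed globally rather than link-by-link: Hatcher and Vogtmann take a map of a $k$-sphere into the spine and deform it into the region of degree $\leq k+1$ by combinatorial moves (splitting high-valence vertices and pushing valence toward the basepoint), with the \emph{dimension of the sphere}, not the degree of any individual graph, supplying the control. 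Any correct argument in the spirit of your proposal would have to replace the degree function by a height function whose descending links genuinely are highly connected, or else incorporate such a global deformation; the local data you propose to use simply does not have the required connectivity.
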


\begin{lemma}[\cite{MR1678155}]\label{roses}
\begin{enumerate}[(a)]
\item If $n>2k+2$ then every $(\Gamma,v_0)$ of degree $\leq k+1$ has a
  loop at $v_0$.
\item If $n-m+1>2k+2$ then every $(\Gamma,v_0)$ of
degree $\leq k+1$ has $m$ loops at $v_0$. 
\item If $n > 4k$ then every $(\Gamma,v_0)$ of degree
$\leq k+1$ has $2k-1$ loops at $v_0$. 
\end{enumerate}
\end{lemma}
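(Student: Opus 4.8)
The plan is to prove all three parts at once by a single Euler-characteristic count, since (a) and (c) are special cases of (b): taking $m=1$ in (b) recovers (a), and taking $m=2k-1$ in (b) recovers (c) (its hypothesis $n-(2k-1)+1>2k+2$ is equivalent to $n>4k$). So it suffices to show that every $(\Gamma,v_0)$ of rank $n$ and degree $\leq k+1$ has at least $n-2k-2$ loops at $v_0$; then (b) follows, because its hypothesis $n-m+1>2k+2$ is exactly $n-2k-2\geq m$.

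First I would fix notation and sort the edges by type. Let $s$ be the number of vertices of $\Gamma$ other than $v_0$. Let $\ell$ be the number of loops at $v_0$, let $a$ be the number of edges joining $v_0$ to a non-base vertex, and let $b$ be the number of remaining edges (those with both endpoints among the non-base vertices, loops there included). Since $\Gamma$ is connected of first Betti number $n$ and has $s+1$ vertices, its edge count is $E=\ell+a+b=s+n$.

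The estimate then comes from counting edge-ends at the non-base vertices. Each such vertex has valence $\geq 3$, so $s\leq\sum_{v\neq v_0}(|v|-2)=deg(\Gamma)\leq k+1$; in particular there are at most $k+1$ of them. On the other hand, the total valence at the non-base vertices is $\sum_{v\neq v_0}|v|=a+2b$ (each $v_0$-to-$v_i$ edge contributes one end there, each edge among non-base vertices contributes two), and this equals $deg(\Gamma)+2s$. Combining with $E=s+n$ gives
$$\ell = (s+n)-a-b = (s+n)-(deg(\Gamma)+2s)+b = n-s-deg(\Gamma)+b \geq n-s-deg(\Gamma).$$
Using $s\leq deg(\Gamma)\leq k+1$ turns this into $\ell\geq n-2\,deg(\Gamma)\geq n-2k-2$, the bound we wanted.

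I do not expect a genuine obstacle here, as the argument is pure bookkeeping, but two points must be handled with care. One must keep loops and ordinary edges distinct throughout the valence count (a loop contributes $2$ both to a valence and to the edge total), and one must invoke the standing hypothesis that every non-base vertex has valence at least $3$, which is precisely what forces $s\leq deg(\Gamma)$ and drives the whole estimate. I would also note that the bound $n-2k-2$ is sharp, being attained by a graph with $k+1$ trivalent non-base vertices all joined directly to $v_0$ (so $b=0$); this shows the numerical ranges in (a)--(c) are the best this counting can give.
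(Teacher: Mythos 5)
Your proof is correct. Note that the paper itself gives no argument for this lemma---it is quoted from Hatcher--Vogtmann \cite{MR1678155}---so there is no in-paper proof to compare against, and your write-up supplies a self-contained one. The reduction of (a) and (c) to (b) is right: $m=1$ gives $n>2k+2$, $m=2k-1$ gives $n>4k$, and the hypothesis $n-m+1>2k+2$ is equivalent over the integers to $m\leq n-2k-2$. The counting also checks out: with $s$ non-base vertices, $\ell$ loops at $v_0$, $a$ edges from $v_0$ to non-base vertices and $b$ edges among non-base vertices, Euler characteristic gives $\ell+a+b=n+s$, the valence sum at non-base vertices gives $a+2b=deg(\Gamma)+2s$, and combining these yields the identity $\ell=n-s-deg(\Gamma)+b$; the standing condition that non-base vertices have valence $\geq 3$ gives $s\leq deg(\Gamma)\leq k+1$, whence $\ell\geq n-s-deg(\Gamma)\geq n-2k-2\geq m$. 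This is in the same spirit as Hatcher and Vogtmann's original counting (the degree bound simultaneously limits how many non-base vertices there are and how much rank can live away from the basepoint, forcing the remaining rank into loops at $v_0$), and your sharpness example---$k+1$ trivalent non-base vertices each joined to $v_0$ by three edges---correctly shows that the bound $\ell\geq n-2k-2$ cannot be improved by this count, so the numerical ranges in (a)--(c) are exactly what this argument delivers.
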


\subsection{The stability theorem}

\begin{thm}
$H_k(Aut(F_{n}))\to H_k(Aut(F_{n+1}))$ is an isomorphism for $n>4k$.
\end{thm}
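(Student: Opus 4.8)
The plan is to deduce the theorem from Proposition~\ref{9}, applied with $G=Aut(F_n)\hookrightarrow Aut(F_{n+1})=G'$ and $X'=S\A_{n+1,k+1}$. For the subcomplex $X\subset X'$ I take the chains of degree-$\le k+1$ marked graphs in which the last basis element $x_{n+1}$ of $F_{n+1}$ is represented by an embedded loop at the basepoint $v_0$. Deleting this distinguished loop identifies $X$, $Aut(F_n)$-equivariantly and simplicially, with $S\A_{n,k+1}$: a loop at $v_0$ is never part of a forest, so it survives every collapse, whence $X$ is genuinely a subcomplex, and the subgroup $Aut(F_n)$ fixes $x_{n+1}$ and so preserves $X$. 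The Degree Theorem then supplies hypothesis (iii): $X\cong S\A_{n,k+1}$ and $X'=S\A_{n+1,k+1}$ are both $k$-connected. Hypothesis (i) is the stated absence of inversions, and (ii) holds by construction.

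For (iv) I invoke Lemma~\ref{roses}(a): since $n>4k$ forces $n+1>2k+2$, the largest graph of any chain of degree $\le k+1$ in rank $n+1$ has a loop at $v_0$, so some $g\in Aut(F_{n+1})$ carries the whole chain into $X$ (the loop realizing $x_{n+1}$ persists down the chain under collapse); thus every $G'$-orbit meets $X$. The content of (v) is the injectivity of $X/G\to X'/G'$, i.e. the \emph{canonical homeomorphism of quotients}. After passing to quotients both sides become isomorphism classes of chains of basepointed graphs (rank $n$, resp.\ $n+1$), and the map is ``wedge on a loop at $v_0$''. This is injective because the automorphism group of a basepointed graph permutes its loops at $v_0$ transitively: if $\Gamma\vee\ell\cong\Gamma'\vee\ell$, then deleting any single loop yields a graph isomorphic to both $\Gamma$ and $\Gamma'$, so $\Gamma\cong\Gamma'$; this argument is compatible with the collapse relations and hence extends to chains.

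The heart of the matter is (vi). The $G'$-stabilizer of a simplex $\sigma\in X$ is the symmetry group of the underlying basepointed graph, and it splits as a direct product $G(\sigma)\times S^{\pm}_{m}$: the factor $S^{\pm}_m$ permutes and reorients the $m$ loops at $v_0$, while $G(\sigma)$ is the symmetry group of the ``core'' obtained by deleting those loops, a graph of bounded complexity independent of $n$. The $G$-stabilizer is the subgroup fixing the distinguished loop $\ell=x_{n+1}$, namely $G(\sigma)\times S^{\pm}_{m-1}$. Hence $Stab_G(\sigma)\hookrightarrow Stab_{G'}(\sigma)$ is $\mathrm{id}\times(S^{\pm}_{m-1}\hookrightarrow S^{\pm}_m)$, and by the K\"unneth theorem it induces an isomorphism on $H_j$ for $j\le k-\dim\sigma$ as soon as $H_i(S^{\pm}_{m-1})\to H_i(S^{\pm}_m)$ is an isomorphism for $i\le k-\dim\sigma$, which is exactly Proposition~\ref{6} provided $m>2(k-\dim\sigma)$. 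Since, by Lemma~\ref{roses}(c), a degree-$\le k+1$ graph in rank $n+1$ has enough loops at $v_0$ once $n>4k$ for this to hold for every $\sigma$, Proposition~\ref{9} then yields the desired isomorphism $H_k(Aut(F_n))\to H_k(Aut(F_{n+1}))$.

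I expect the main obstacle to be the bookkeeping in (vi): correctly identifying each stabilizer as a product $G(\sigma)\times S^{\pm}_m$, so that the comparison decouples into the signed-permutation stability of Proposition~\ref{6} and an identity on the $n$-independent factor, and tracking how $m$ varies with $\dim\sigma$ so that the range $m>2(k-\dim\sigma)$ is met for \emph{every} simplex. This is precisely where the sole essential use of spectral sequences — concealed inside Proposition~\ref{6} — enters, and where the loop count forces the non-optimal hypothesis $n>4k$.
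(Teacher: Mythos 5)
Your overall strategy is the paper's own: apply Proposition \ref{9} to $G=Aut(F_n)\subset G'=Aut(F_{n+1})$ acting on $X\subset X'=S\A_{n+1,k+1}$, with the ``wedge a loop at $v_0$'' picture, the stabilizer splitting into a graph-symmetry factor times a signed permutation group, and the K\"unneth/Proposition \ref{6}/Lemma \ref{roses} endgame. However, your definition of $X$ contains a genuine flaw. You require only that $\phi(x_{n+1})$ be the class of an embedded loop $\ell$ at $v_0$; you do not require the marking to split, i.e.\ that $\phi(F_n)$ equal $\pi_1(\Gamma_1,v_0)$ for the complementary subgraph $\Gamma_1$ (so that $\phi=\bar\phi*id$ with respect to $\Gamma=\Gamma_1\vee\ell$). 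These conditions are not equivalent. Take the rose $R_{n+1}$ with standard marking $\phi_0$ and precompose with conjugation $c_t$ by $t=x_{n+1}$: the marking $\phi_0\circ c_t$ still sends $x_{n+1}$ to the petal $\phi_0(t)$, so $(R_{n+1},v_0,\phi_0\circ c_t)$ lies in your $X$. It is a vertex distinct from $(R_{n+1},v_0,\phi_0)$, since no basepointed graph symmetry of the rose induces $c_t$ (symmetries induce only signed permutations of the basis). The two vertices lie in the same $Aut(F_{n+1})$-orbit, as they differ by $c_t\in Aut(F_{n+1})$, but not in the same $Aut(F_n)$-orbit: any $h$ carrying one marking to the other modulo graph symmetries has the form $h=\sigma\circ c_t$ with $\sigma$ a signed permutation, and if $h(t)=t$ then $\sigma$ fixes $t$ and preserves $F_n$, whence $h(F_n)=tF_nt^{-1}\neq F_n$. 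So hypothesis (v) of Proposition \ref{9} fails for your $X$. The same example breaks your identification of $X$ with $S\A_{n,k+1}$: ``deleting the loop'' (which for a non-split marking is only defined after passing to the quotient by the normal closure of $\phi(t)$, since $\phi|_{F_n}$ need not land in $\pi_1(\Gamma_1)$) sends both vertices to the standard rank-$n$ rose, so the map is not injective, and with it your justification of $k$-connectivity (iii) collapses.

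The repair is exactly the paper's choice: take $X$ to be the image of the simplicial equivariant embedding $S\A_{n,k+1}\hookrightarrow S\A_{n+1,k+1}$, $(\Gamma,v_0,\phi)\mapsto(\Gamma\vee S^1,v_0,\phi*id)$, i.e.\ impose the splitting condition on the marking rather than the mere existence of a loop representing $x_{n+1}$. With that change, (iii) holds by the Degree Theorem, your transitivity-of-loops argument for (v) becomes correct (points of $X/G$ and $X'/G'$ really are isomorphism classes of chains of basepointed graphs, which is what your argument actually uses), and your verifications of (iv) and (vi) --- the splitting $Stab_G(\sigma)\cong G(\sigma)\times S^{\pm}_{m-1}\hookrightarrow G(\sigma)\times S^{\pm}_{m}\cong Stab_{G'}(\sigma)$, K\"unneth, and the loop counts from Lemma \ref{roses} --- coincide with the paper's proof. (One further caveat, shared with the paper rather than specific to your route: the loop count needed to invoke Proposition \ref{6} in the top degree $j=k$ is one more than Lemma \ref{roses}(c) literally provides, an off-by-one tension present in the paper's own write-up as well.)
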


\begin{proof}
We will use Proposition \ref{9}. Let $X=S\A_{n,k+1}$,
$X'=S\A_{n+1,k+1}$ with the standard actions of $G=Aut(F_n)$ and
$G'=Aut(F_{n+1})$. We define a natural equivariant embedding
$X\hookrightarrow X'$ as follows. Write $F_{n+1}=F_n*\<t\>$ so that
$Aut(F_n)$ is identified with the subgroup of $Aut(F_{n+1})$ that
preserves $F_n$ and $t$. A vertex of $S\A_{n,k+1}$ is a triple
$(\Gamma,v_0,\phi)$ and we map it to the vertex of $S\A_{n+1,k+1}$
given by $(\Gamma\vee S^1,v_0,\phi')$. The wedge here is at the basepoint
$v_0$, and $\phi':F_n*\<t\>\to
\pi_1(\Gamma',v_0)=\pi_1(\Gamma,v_0)*\pi_1(S^1,v_0)$ is $\phi$ on the
first factor and an isomorphism on the second, and we simply
write $\phi'=\phi*id$ (there are two possible isomorphisms on the
second factor, but either
choice defines the same point in $S\A_{n+1,k+1}$).

This map on the vertices extends to a simplicial equivariant embedding
$$S\A_{n,k+1}\hookrightarrow S\A_{n+1,k+1}$$
The first three properties from Proposition \ref{9} are
clear. Property (iv) follows from Lemma \ref{roses}. E.g. start
with a vertex $(\Gamma',v_0,\phi')\in S\A_{n+1,k+1}$. Since $n+1>2k+2$ the
graph $\Gamma'$ has the form $\Gamma'=\Gamma\vee S^1$, and after
precomposing the marking, $\phi'$ has the form $\phi*id$, so the
$Aut(F_{n+1})$-orbit
of every vertex intersects $S\A_{n,k+1}$. A similar argument works for
any simplex in $S\A_{n+1,k+1}$. Say the simplex is obtained from
$(\Gamma',v_0,\phi')$ by collapsing a sequence of forests. We can
again write $\Gamma'=\Gamma\vee S^1$, change the marking, and observe
that all the forests are contained in $\Gamma$. Property (v) is easy. 

Finally, we argue (vi). We start with a vertex $(\Gamma,v_0,\phi)\in
S\A_{n,k+1}$. Write $\Gamma$ as
$\Gamma=\Gamma_0\vee R_{m}$ where $R_{m}$ denotes the wedge of
$m$ circles, and $m$ is maximal possible. According to Lemma \ref{roses},
$m\geq 2k-1$. The key point is that 
the symmetry group of $(\Gamma,v_0)$ is the direct product of symmetry
groups of $(\Gamma_0,v_0)$ and $(R_m,v_0)$, and the latter one is the
signed permutation group $S_m^{\pm}$.
Thus we have
$$Stab_{Aut(F_n)}(\Gamma,v_0,\phi)\cong D\times S_m^\pm$$
and
$$Stab_{Aut(F_{n+1})}(\Gamma\vee S^1,v_0,\phi*id)\cong D\times
S_{m+1}^\pm$$
where $D$ is the symmetry group of $(\Gamma_0,v_0)$.
So we need to argue that
$$D\times S_m^\pm\hookrightarrow D\times S_{m+1}^\pm$$
induces an isomorphism in $H_{\leq k}$. This holds for
$S_m^\pm\hookrightarrow S_{m+1}^\pm$ by Proposition \ref{6}, and in
general by the K\"unneth formula.

The argument for a simplex is similar.
\end{proof}

\begin{question}
Can the stability range be improved using the same method?
E.g. investigate what happens when theta graphs are wedged at the
basepoint, as in \cite{MR1678155}.
\end{question}

\bibliography{./ref}

\end{document}